\newcommand{\R}{\mathbb{R}}
\newcommand{\Z}{\mathbb{Z}}
\newcommand{\E}{\mathbb{E}}
\newcommand{\T}{\mathbb{T}}
\newcommand{\calH}{\mathcal{H}}
\newcommand{\calA}{\mathcal{A}}
\newcommand{\calC}{\mathcal{C}}
\newcommand{\normal}{\mathcal{N}}
\newcommand{\pdiff}[2]{\frac{\partial #1}{\partial #2}}
\newcommand{\pdiffII}[3]{\ifstrequal{#2}{#3}
{\frac{\partial^2 #1}{\partial #2^2}}
{\frac{\partial^2 #1}{\partial #2 \partial #3}}
}
\newcommand{\diffII}[3]{\ifthenelse{\equal{#2}{#3}}
{\frac{d^2 #1}{d #2^2}}
{\frac{d^2 #1}{d #2 d #3}}
}
\newcommand{\diff}[2]{\frac{d #1}{d #2}}
\newcommand{\grad}{\nabla}
\renewcommand{\Pr}{\text{Pr}}
\newcommand{\symdiff}{\Delta}
\newtheorem{theorem}{Theorem}[section]
\newtheorem{lemma}[theorem]{Lemma}
\newtheorem{remark}[theorem]{Remark}
\DeclareMathOperator{\NS}{NS}
\title{Testing surface area with arbitrary accuracy}
\author{Joe Neeman \\
       \affaddr{Department of Electrical and Computer Engineering, and}\\
       \affaddr{Department of Math}\\
       \affaddr{University of Texas at Austin}\\
       \email{joeneeman@gmail.com}
}
\begin{document}

\maketitle

\begin{abstract}
 Recently, Kothari et al.\ gave an algorithm for testing the surface
 area of an arbitrary set $A \subset [0, 1]^n$. Specifically, they
 gave a randomized algorithm such that if $A$'s surface area
 is less than $S$ then the
 algorithm will accept with high probability, and if the algorithm
 accepts with high probability then there is some perturbation of
 $A$ with surface area at most $\kappa_n S$. Here, $\kappa_n$
 is a dimension-dependent constant which is strictly larger than 1
 if $n \ge 2$, and grows to $4/\pi$ as $n \to \infty$.
 
 We give an improved analysis of Kothari et al.'s algorithm.
 In doing so, we replace the constant $\kappa_n$ with
 $1 + \eta$ for $\eta > 0$ arbitrary. We also extend the
 algorithm to more general measures on Riemannian manifolds.
\end{abstract}

\category{Mathematics of Computing}{Probability and Statistics}{Probabilistic Algorithms}\\
\category{Theory of Computation}{Randomness, Geometry, and Discrete Structures}
{Computational Geometry}
\terms{surface area, noise sensitivity, property testing}

\section{Introduction}

Consider the problem of estimating the surface area of
a set $A \subset \R^n$. This fundamental geometric problem has been widely
studied, thanks in part to its applications in computer graphics
(see, e.g.,~\cite{LYZZP:10}) and medical imaging (see, e.g.,~\cite{BaGuCr:86,CuFrRo:07}).
Due to its diverse applications,
surface area estimation has been studied under several
different models. In this work, we consider a query access model,
in which one can choose points in $\R^n$ and ask whether then belong to $A$;
this is perhaps the most natural model for applications to learning.
In contrast, 
in stereography~\cite{BaGuCr:86} one considers the problem
of estimating $A$'s surface area given complete access to many random projections
(or slices) of $A$; this is a natural model for certain problems in 3D imaging.

\paragraph{Testing surface area}
Kearns and Ron~\cite{KearnsRon:98} introduced the study of surface area
in the property testing framework. In this framework, which goes back
to~\cite{BlLuRu:93},
we fix some predicate $P$
and ask for an algorithm that can distinguish (with high probability)
problem instances
satisfying $P$ from instances that are far from satisfying $P$.
For the specific case of testing the surface area of a set $A \subset [0, 1]^n$,
we denote the Lebesgue measure by $\lambda_n$, its corresponding surface area
measure by $\lambda_n^+$, and
we ask for an algorithm that accepts if $\lambda_n^+(A) \le S$ and rejects if
$\lambda_n^+(B) \ge (1 + \eta) S$ for all $B$ satisfying $\lambda(B \symdiff A) \le \epsilon$.
This problem was first considered by Kearns and Ron~\cite{KearnsRon:98} in the case $n=1$.
They showed that it can be solved for any $\epsilon > 0$ with $O(1/\epsilon)$ queries,
provided that $\eta \ge 1/\epsilon$. The approximation error was improved by
Balcan et al.~\cite{BBBY:12}, who gave an algorithm (still for $n=1$) that requires $O(\epsilon^{-4})$
queries but works for $\eta = 0$.
Substantial progress was made by
Kothari et al.~\cite{KNOW:13}, who gave an algorithm that works in arbitary
dimensions, but requires $\eta$ to be bounded away from zero as soon as $n \ge 2$.

Our main result is an improvement of Kothari et al.'s analysis, showing that
for any $\epsilon, \eta > 0$, an essentially identical
algorithm requires $O(\eta^{-3} \epsilon^{-1})$ samples, and guarantees that if the algorithm
rejects with substantial probability then
$\lambda_n^+(B) \ge (1 + \eta) S$ for all $B$ satisfying $\lambda(B \symdiff A) \le \epsilon$.
Besides the improvement in Kothari et al.'s soundness condition, our other main
contribution is a unified argument that applies to more general spaces than
$[0, 1]^n$ with the Lebesgue measure. 

\paragraph{Other work on estimating surface area}
The property testing formulation of surface area measurement asks for fairly
weak guarantees compared to
the more general problem of estimating the surface area of $A$.
One advantage of these weaker requirements is that the problem can be solved in
much greater generality, and with fewer queries. For example, the state of the
art in surface area estimation for convex sets is an algorithm of
Belkin et al.~\cite{BeNaNi:06} which requires $\tilde O(n^4 \epsilon^{-2})$ queries. 
One obvious difficulty in moving beyond convex sets is that a general set $A$ may have
large surface area, but only because of some very small part of it in which the boundary
wiggles crazily. Cuevas et al.~\cite{CuFrRo:07} dealt with this issue by imposing some
regularity on the boundary of $A$, but their algorithm required $O(\epsilon^{-2n})$
samples.

The property testing framework deals with the ``crazy boundary'' issue in
quite a simple way. Essentially, we relax the soundness condition so that if
$A$ can be modified on a set of measure $\epsilon$ in order to reduce its
surface area below $(1 + \eta) S$, then we allow the algorithm to accept.

\paragraph{Surface area and learning}
One application for surface area testing -- which was noted already
by Kothari et al. -- is in the ``testing before learning'' framework.
Indeed, Klivans et al.~\cite{KlOdSe:08} gave an algorithm for agnostically
learning sets in $\R^n$ under the Gaussian measure; their algorithm
runs in time $n^{O(S^2)}$, where $S$ is the Gaussian surface area
of the set that they are trying to learn. In particular, if we could
test the Gaussian surface area of a set before trying to learn it,
we would have advance notice of the learning task's complexity.
(Although we have not yet mentioned
Gaussian surface area, our results hold just as well in that case as
they do for Lebesgue surface area.)

\paragraph{Surface area and noise sensitivity}

Consider the torus $\T^n = (\R / \Z)^n$ with the Lebesgue measure $\lambda_n$
(from now on, for technical convenience, we will consider sets $A \subset \T^n$ instead of sets
$A \subset [0, 1]^n$).
Let $X$ be a uniformly distributed point in $\T^n$ and set $Y = X + \sqrt{2t} Z$,
where $Z \sim \normal(0, I_n)$ is a standard Gaussian vector.
For a set $A \subset \T^n$, we define the \emph{noise sensitivity} of
$A$ at scale $t$ by
\[
 \NS_t(A) = \Pr(X \in A, Y \not \in A) + \Pr(Y \in A, X \not \in A).
\]

Crofton, inspired by the Comte de Buffon's famous needle problem,
was the first to make a connection between surface area and noise sensitivity. 
His classical formula (see, e.g.,~\cite{Santalo:53}) implies that if $A \subset \T^n$
is a set with $\calC^1$ boundary then the surface area of $A$ is equal to
$\frac{2\sqrt t}{\sqrt \pi}$ times
the expected number of times that the line segment $[X, Y]$ crosses $\partial A$.
Since this number of crossings is always at least 1 on the event $\{1_A(X) \ne 1_A(Y)\}$,
we have the inequality
\begin{equation}\label{eq:ns-sa}
 \NS_t(A) \le \frac{2\sqrt t}{\sqrt\pi} \lambda_n^+(A),
\end{equation}
where $\lambda_n^+$ denotes the surface area.

The inequality~\eqref{eq:ns-sa} cannot be reversed in general.
To construct a counter-example, note that $A$ may be modified on a set of
arbitrarily small measure (which will affect the left hand side
of~\eqref{eq:ns-sa} by an arbitrarily small amount) while increasing its
surface area by an arbitrarily large amount. The main geometric result of this work
is that when $t$ is small, these counter-examples to a reversal
of~\eqref{eq:ns-sa} are essentially
the only ones possible.

\begin{theorem}\label{thm:lebesgue}
 For any $A \subset \T^n$ with $\calC^1$ boundary, and for every $\eta, t > 0$,
 there is a set $B \subset \T^n$ with $\lambda_n(A \symdiff B) \le \NS_t(A) / \eta$
 and
 \[
 \frac{2\sqrt t}{\sqrt\pi}
 \lambda_n^+(B) \le (1 + o(\eta)) \NS_t(A).
 \]
\end{theorem}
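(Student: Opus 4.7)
The plan is to construct $B$ as a super-level set of the heat-semigroup smoothing of $\1{A}$. Write $f = \1{A}$ and, for $s \in (0, t]$, let $u_s = P_s f$, where $P_s$ denotes convolution with the heat kernel on $\T^n$; equivalently, $u_s(x) = \Pr(x + \sqrt{2s} Z \in A)$. For $\tau \in (0, 1)$, set $B_{s, \tau} = \{u_s \ge \tau\}$. My goal is to choose a scale $s$ and a threshold $\tau$ so that $B := B_{s, \tau}$ satisfies both conclusions.

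\paragraph{Step 1: closeness in symmetric difference.}
A layer-cake computation using $f = \1{A}$ yields
\[
\int_0^1 \lambda_n(B_{s, \tau} \symdiff A) \, d\tau \; = \; \int_{\T^n} |u_s - f| \, dx \; = \; \NS_s(A) \; \le \; \NS_t(A),
\]
where the monotonicity of $s \mapsto \NS_s(A)$ follows from the identity $\frac{d}{ds} \NS_s(A) = 2 \int |\nabla u_{s/2}|^2 \, dx \ge 0$. By Markov, the set of $\tau$ with $\lambda_n(B_{s, \tau} \symdiff A) \le \NS_t(A)/\eta$ has Lebesgue measure at least $1 - \eta$.

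\paragraph{Step 2: surface area via co-area.}
The co-area formula gives $\int_0^1 \lambda_n^+(B_{s, \tau}) \, d\tau = \int_{\T^n} |\nabla u_s| \, dx$. Combined with Step~1 via a union bound, it suffices to exhibit a scale $s_* \in (0, t]$ at which
\[
\|\nabla u_{s_*}\|_{L^1(\T^n)} \; \le \; (1 + o(\eta)) \cdot \frac{\sqrt \pi}{2 \sqrt t} \, \NS_t(A);
\]
a second Markov argument over $\tau$ then produces the required level.

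\paragraph{Step 3: the reverse Crofton bound (the heart of the proof).}
I would prove the $L^1$-gradient bound geometrically by applying Crofton directly to candidate sets $B = B_{s, \tau}$. Crofton gives $\frac{2 \sqrt t}{\sqrt \pi} \lambda_n^+(B) = \E[N_B]$, where $N_B$ counts crossings of $[X, X + \sqrt{2t} Z]$ with $\partial B$. Since $N_B \ge \1{1_B(X) \ne 1_B(Y)}$ with strict inequality precisely on paths that cross $\partial B$ more than once,
\[
\tfrac{2 \sqrt t}{\sqrt \pi} \lambda_n^+(B) \; = \; \NS_t(B) + \E\bigl[N_B - \1{1_B(X) \ne 1_B(Y)}\bigr].
\]
Because $\partial B = \{u_s = \tau\}$ is a regular level set of the smooth function $u_s$, its curvature is controlled on scale $\sqrt s$, and a Brownian segment of length $\asymp \sqrt t$ should cross such a hypersurface more than once with probability of order $\sqrt{t/s}$; choosing $s \asymp \eta^2 t$ (and discarding a small fraction of $\tau$'s where $|\nabla u_s|$ is small, so that $\partial B$ is genuinely smooth) would make the excess-crossings term $o(\eta) \NS_t(A)$. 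The remaining piece $\NS_t(B) \le (1 + o(\eta)) \NS_t(A)$ will not follow from the crude bound $|\NS_t(B) - \NS_t(A)| \le 2\lambda_n(A \symdiff B)$ (which is useless here), but rather from writing $\NS_t(B) - \NS_t(A)$ as an integral over paths that change sign pattern between $A$ and $B$, and exploiting the specific structure of $B$ as a level set of $u_s$ with $s \ll t$.

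\paragraph{Main obstacle.}
Step~3 is the crux: the naive approach (Cauchy--Schwarz between $\|\nabla u_s\|_{L^1}$ and $\|\nabla u_s\|_{L^2}$, combined with the identity $\NS_t(A) = 4 \int_0^{t/2} \|\nabla u_s\|_{L^2}^2 \, ds$) loses a factor of order $1/\sqrt{\NS_t(A)}$, so no pure semigroup/Fourier argument at a single scale $s$ can suffice. The proof must leverage the geometry of the level set $\{u_s = \tau\}$, quantifying both (i) its regularity and (ii) the rarity of multiple Brownian crossings of such a surface, with the scale $s$ tuned to the allowed error $\eta$.
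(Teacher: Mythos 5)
Your overall construction (threshold $P_s 1_A$ at a random level and pick a good level) matches the paper's, but the proposal leaves the crucial quantitative step unproved, and the sketch you give for it looks unworkable as stated. The paper's argument hinges on two tools you do not invoke: the pointwise Bakry--Ledoux estimate $|\grad P_t f| \le (2t)^{-1/2}\, I(P_t f)$, where $I = \phi\circ\Phi^{-1}$ is the Gaussian isoperimetric profile, and a carefully chosen weight $\psi(\tau) = \min(\tau,1-\tau)/I(\tau)$ in the coarea integral. With that weight, the Bakry--Ledoux bound makes the $I(P_t 1_A)$ factor cancel exactly against the denominator of $\psi$, producing
\[
\int_0^1 \psi(\tau)\, \lambda_n^+\bigl((P_t 1_A)^{\ge\tau}\bigr)\, d\tau
\;\le\; (2t)^{-1/2}\, \NS_t(A),
\]
and then the constant $\sqrt{\pi/2}$ is just $\bigl(\int_0^1\psi\bigr)^{-1}$. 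Restricting the integral to $[\eta,1-\eta]$ costs only the $(1+o(\eta))$ factor and simultaneously gives the closeness bound. In contrast, you reduce to the \emph{unweighted} inequality $\|\grad P_{s_*} 1_A\|_{L^1} \le (1+o(\eta)) \tfrac{\sqrt\pi}{2\sqrt t}\NS_t(A)$; applying Bakry--Ledoux unweighted gives $\|\grad P_t 1_A\|_{L^1} \le (2t)^{-1/2}\E\, I(P_t 1_A)$, and $\E\, I(P_t 1_A)$ is \emph{not} controlled by $\NS_t(A) = \E\min(P_t 1_A, 1-P_t 1_A)$ with a universal constant, because $I(x)/\min(x,1-x) \to \infty$ as $x \to 0,1$. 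That is precisely the loss the weight $\psi$ is designed to eliminate, and nothing in your Step~2 replaces it.

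The geometric route you propose in Step~3 to repair this does not appear to work either. You tune the smoothing scale to $s \asymp \eta^2 t$, i.e.\ $s \ll t$, and claim the excess-crossings term is of order $\sqrt{t/s}$. But $\sqrt{t/s} \asymp 1/\eta \gg 1$ — this is the opposite of small, which matches the geometric picture: level sets of $u_s$ have radius of curvature on the order of $\sqrt s$, so a Brownian segment of length $\sim\sqrt t \gg \sqrt s$ is \emph{more} likely to cross such a surface multiple times, not less. Making the level sets smooth relative to the noise scale would require $s \gg t$, but then your Step~1 closeness bound ($\lambda_n(B\symdiff A)\lesssim \NS_s(A)/\eta$, and $\NS_s \ge \NS_t$) goes the wrong way. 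Since you yourself flag Step~3 as "the heart of the proof" and "the main obstacle" without supplying an argument, and since the single ingredient you propose there has the scaling reversed, the proposal as written has a genuine gap that the paper's $\psi$-weighted coarea argument is specifically built to avoid.
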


\begin{remark}
 One might wonder whether the $(1 + o(\eta))$ term on the right hand
 side of Theorem~\ref{thm:lebesgue} may be removed. Although we cannot rule
 out some improvement of Theorem~\ref{thm:lebesgue}, one should keep in
 mind the example of the ``dashed-line'' set
 $A_t \subset \T^1$ consisting of $\lfloor 5/\sqrt t \rfloor$ intervals of length
 $\sqrt t/10$, separated by intervals of length $\sqrt t/10$.
 Then $\NS_t(A_t)$ is a constant factor smaller
 than $2 \sqrt t / \lambda_1^+(A_t) / \sqrt{\pi}$
 as $t \to 0$. Moreover, reducing $\lambda_1^+(A_t)$ by
 a constant factor would require changing $A_t$ on a set of constant measure.
 In other words, it is not possible to simultaneously have
 $\lambda_1^+(A_t \symdiff B) = o(1)$ and
 \[
 \frac{2\sqrt t}{\sqrt\pi}
 \lambda_n^+(B) \le \NS_t(A_t)
 \]
 as $t \to 0$. On the other hand, if we let $\eta = \omega(1)$ in
 Theorem~\ref{thm:lebesgue} increase sufficiently slowly, we see that for
 any sequence $A_t$, we can find $B_t$ with
 $\lambda_1^+(A_t \symdiff B_t) = o(1)$ and
 \[
 \frac{2\sqrt t}{\sqrt\pi}
 \lambda_n^+(B_t) \le (1 + o(1))\NS_t(A_t).
 \]

\end{remark}

Theorems of this sort were introduced by Kearns and Ron~\cite{KearnsRon:98},
and by Balcan et al.~\cite{BBBY:12} in
dimension 1, and extended to $\T^n$ by Kothari et al.~\cite{KNOW:13}.
However, Kothari et al.\ gave a factor of $\kappa_n + \eta$ instead of $1 + \eta$
on the right hand side, where $\kappa_n$ is an explicit constant that grows
from $1$ to $4/\pi$ as $n$ goes from 1 to $\infty$. In fact, our analysis
will be closely based on that of~\cite{KNOW:13}; our main contribution
is an improved use of certain smoothness estimates,
leading to an improved constant.

With~\eqref{eq:ns-sa} and Theorem~\ref{thm:lebesgue} in hand, the algorithm
for testing surface area is quite simple. By sampling $m$ pairs
$(X, Y)$ according to the distribution above, one can estimate 
$\NS_t(A)$ to accuracy $O(m^{-1/2} \sqrt{\NS_t(A)})$. Consider, then, the algorithm
that says ``yes'' if and only if this estimate is smaller than
$2 \sqrt{t/\pi} (S + O(m^{-1/2} t^{-1/4} S^{1/2}))$.
The completeness of the algorithm then follows immediately from~\eqref{eq:ns-sa}
and Chebyshev's inequality.

The preceding algorithm is complete for any $m$ and $t$, but to show its soundness
we will need to specify $m$ and $t$. First, we will assume that $m^{-1/2} t^{-1/4} S^{1/2} \le \eta$.
An application of Chebyshev's inequality then shows that for $A$
to be accepted with high probability,
$A$ must satisfy $\NS_t(A) \le 2 \sqrt{t/\pi} (1 + O(\eta)) S$.
Applying Theorem~\ref{thm:lebesgue}, we see that there is some set $B$ with
\[
\lambda_n(A \symdiff B) \le O(S t^{1/2} \eta^{-1})
\]
and
\[
\lambda_n^+(B) \le \frac{\sqrt\pi}{2\sqrt{t}}(1 + o(\eta)) \NS_t(A) \le (1 + O(\eta)) S.
\]
Hence, the algorithm is sound provided that $S t^{1/2} \eta^{-1} \le \epsilon$.
Putting this condition together with the previous one, we see that
$t = (\epsilon \eta / S)^2$ and $m = O(\eta^{-3} \epsilon^{-1} S^2)$ suffices.
We summarize this discussion with the following algorithm.

\begin{algorithm}
 \SetKwInOut{Input}{Input}
 \SetKwInOut{Output}{Output}
 \setlength{\algomargin}{2em}
 \Input{query access to $A \subset \T^n$, \\
 parameters $S, \eta, \epsilon > 0$}
 \Output{with probability at least $2/3$, \\
 ``yes'' if $\lambda_n^+(A) \le S$, and \\
 ``no'' if $\lambda_n^+(B) \ge (1 + \eta) S$ for all $B$ with
 $\lambda_n(A \symdiff B) \le \epsilon$}
 
 \BlankLine
 $t \leftarrow \left(\frac{\epsilon \eta}{S}\right)^2$ \;
 $m \leftarrow 7 \eta^{-3} \epsilon^{-1} S^2$ \;
 Sample $X_1, \dots, X_m$ i.i.d. uniformly in $\T^n$\;
 Sample $Z_1, \dots, Z_m$ i.i.d. from $\normal(0, I_n)$\;
 For each $i = 1, \dots, m$, set $Y_i = X_i + \sqrt{2t} Z_i$\;
 Output ``yes'' iff $\frac{1}{m}\#\{i: 1_A(X_i) \ne 1_A(Y_i)\} \le \frac{2\sqrt t}{\sqrt \pi} (S + m^{-1/2}t^{-1/4} S^{1/2})$\;
 \caption{testing surface area}
 \label{alg}
\end{algorithm}

\paragraph{Surface area testing in Gaussian and other spaces}
Our analysis is not specific to the Lebesgue measure on the torus.
For example, Theorem~\ref{thm:lebesgue} also holds if
$\lambda_n$ is replaced by the Gaussian measure and
$\NS_t(A)$ is replaced by
$\Pr(1_A(Z) \ne 1_A(\rho Z + \sqrt{1-\rho^2} Z'))$, where $Z$ and $Z'$ are
independent Gaussian vectors on $\R^n$.
This Gaussian case was also considered
in~\cite{KNOW:13}, who obtained the same result but with an extraneous factor
of $4/\pi$ on the right hand side.
Since there is an analogue of~\eqref{eq:ns-sa} also in the Gaussian case
(due to Ledoux~\cite{Ledoux:94}), one also obtains an algorithm for
testing Gaussian surface area.

More generally, one could ask for a version of Theorem~\ref{thm:lebesgue}
on any weighted manifold.
We propose a generalization of Theorem~\ref{thm:lebesgue}
in which the noise sensitivity is measured with respect to a Markov
diffusion
semigroup and the surface area is measured with respect to that semigroup's
stationary measure. The class of stationary measures allowed by
this extension includes log-concave measures on $\R^n$ and Riemannian
volume elements on compact manifolds.
Since Ledoux's argument~\cite{Ledoux:94} may be extended in this
generality, our algorithm again extends.

\section{Markov semigroups and curvature}

As stated in the introduction, we will carry out the proof of
Theorem~\ref{thm:lebesgue} in the setting of Markov diffusion semigroups. 
An introduction to this topic may be found in the Ledoux's
monograph~\cite{Ledoux:00}. To follow our proof, however, it is not
necessary to know the general theory; we will be concrete about
the Gaussian and Lebesgue cases, and it is enough to keep one of these
in mind.

Let $(M, g)$ be a smooth, Riemannian $n$-manifold and consider the
differential operator $L$ that is locally defined by
\begin{equation}\label{eq:L}
 (L f)(x) = \sum_{i,j = 1}^n g^{ij}(x) \pdiffII{f}{x_i}{x_j}
 + \sum_{i=1}^n b^i(x) \pdiff{f}{x_i}
\end{equation}
where $b^i$ are smooth functions and
$(g^{ij} (x))_{i,j=1}^n$ is the inverse tensor of $g$ in local coordinates.
Such an operator induces a semigroup $(P_t)_{t \ge 0}$ of operators which
satisfies $\diff{}{t} P_t f = L f$.
There are certain technical issues, which we will gloss over here,
regarding the domains of these operators. We will assume that
the domain of $L$ contains an algebra $\calA$ satisfying $P_t \calA \subset \calA$.
We will assume moreover that $P_t$ has an invariant probability distribution
$\mu$ which is absolutely continuous with respect to the Riemannian
volume element on $M$; we will also assume that $\calA$ is dense in $L_p(\mu)$ for
every $\mu$.
In any case, these assumptions are satisfied in many interesting examples,
such as when $P_t$ is the heat semigroup on a compact Riemannian
manifold,
or when $P_t$ is the Markov semigroup associated with a log-concave
measure $\mu$ on $\R^n$. See~\cite{Ledoux:00} for more details.

Given a Markov semigroup $P_t$, we define the noise sensitivity
by
\begin{equation}\label{eq:ns-def}
\NS_t(A) = \int_M |P_t 1_A(x) - 1_A(x)| \, d\mu(x).
\end{equation}
The probabilistic interpretation of this quantity is given by the Markov
process associated with $P_t$. This is a Markov process $(X_t)_{t \in \R}$
with the property that for any $f \in L_1(\mu)$,
$\E (f(X_t) \mid X_0) = (P_t f)(X_0)$. Given such a process,
the noise sensitivity may be alternatively written as
$\NS_t(A) = \Pr(1_A(X_0) \ne 1_A(X_t))$.

The other notion we need is that of surface area. Recalling that
$\mu$ was assumed to have a density with respect to the Riemannian volume,
we define
\[
 \mu^+(A) = \int_{\partial A} \mu(x) d\calH_{n-1}(x),
\]
where $\calH_{n-1}$ is the $(n-1)$-dimensional Hausdorff measure.

Let us revisit $(\T^n, \lambda_n)$ and $(\R^n, \gamma_n)$ in our
more abstract setting. 
In the case of $\T^n$, we set $L$ to be
$\sum_{i=1}^n \pdiffII{}{x_i}{x_i}$. Then $P_t$ is given by
\[
 (P_t f)(x) = \int_{\R^n} f(x + \sqrt {2t} y)\, d\gamma_n(y).
\]
The associated Markov process $X_t$ is simply Brownian motion, and so
we see that the noise sensitivity defined in~\eqref{eq:ns-def}
coincides with the definition that we gave in the introduction.

In the Gaussian case, we define $L$ by
\[
(Lf)(x) = \sum_{i=1}^n \left(\pdiffII{f}{x_i}{x_i} - x_i \pdiff{f}{x_i}\right).
\]
Then $P_t$ is given by
\[
 (P_t f)(x) = \int_{\R^n} f(e^{-t} x + \sqrt{1-e^{-2t}} y)\, d\gamma_n(y).
\]
The associated Markov process $X_t$ is the Ornstein-Uhlenbeck process,
which is the Gaussian process for which $\E X_s^T X_t = e^{-|s-t|} I_n$.

In order to state our generalization of Theorem~\ref{thm:lebesgue}, we
need a geometric condition on the semigroup $P_t$.
Following Bakry and Emery~\cite{BakryEmery:85} (see also~\cite{Ledoux:00}),
we say that the semigroup $(P_t)_{t \ge 0}$ has curvature $R$ if the inequality
$|\grad P_t f| \le e^{-R t} P_t |\grad f|$ holds pointwise
for every $f \in \calA$. One can check easily from the definitions
that in the heat semigroup on $\T^n$ has curvature $0$, while the
Ornstein-Uhlenbeck semigroup on $\R^n$ has curvature $1$.

In this abstract setting, our main theorem is the following.
Observe that by specializing to the torus $\T^n$ with the Lebesgue
measure $\lambda_n$ and curvature $R = 0$, we recover
Theorem~\ref{thm:lebesgue}.
\begin{theorem}\label{thm:main}
Suppose that $P_t$ has curvature $R$.
For any $A \subset M$ with $\calC^1$ boundary and for every
$\eta, t > 0$, there is a set $B \subset M$ with $\mu(A \symdiff B)
\le \NS_t(A) / \eta$ and
\[
 \mu_n^+(B)
 \le \sqrt{\frac{\pi}{2}} \left(1 + \frac{\sqrt \pi \eta}{\sqrt{\log(1/\eta)}}(1 + o_\eta(1))\right) c_R(t) \NS_t(A),
\]
 where $c_R(t) = \left(\frac{e^{2Rt} - 1}{R}\right)^{-1/2}$ if $R \ne 0$
 and $c_0(t) = (2t)^{-1/2}$.
 Here, $o_\eta(1)$ denotes a quantity that tends to zero as $\eta \to 0$.
\end{theorem}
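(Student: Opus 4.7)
Let $\phi = P_t 1_A$ and $u = \Phi^{-1}(\phi)$, where $\Phi$ is the standard Gaussian CDF. Under the curvature-$R$ hypothesis, the Bakry--Ledoux gradient estimate yields $|\grad \phi| \le c_R(t)\, I_\gamma(\phi)$ pointwise, where $I_\gamma$ is the Gaussian isoperimetric profile; equivalently, $u$ is $c_R(t)$-Lipschitz. The plan is to take $B$ to be a level set $B_{r^*} := \{u > r^*\}$ for a well-chosen $r^*$ in the interval $[-\delta, \delta]$, with $\delta := \Phi^{-1}(1-\eta)$.

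For the symmetric difference, any $r^* \in [-\delta, \delta]$ satisfies $\mu(A \symdiff B_{r^*}) \le \NS_t(A)/\eta$: each point in the symmetric difference contributes at least $\min(\Phi(r^*), 1 - \Phi(r^*)) \ge \eta$ to the integrand of $\NS_t(A) = \int |\phi - 1_A|\, d\mu$, so a Markov-type argument applies.

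For the surface area, the coarea formula applied to the $c_R(t)$-Lipschitz function $u$ gives, for any subinterval $[r_-, r_+] \subset [-\delta, \delta]$,
\[
\int_{r_-}^{r_+} \mu^+(B_r)\, dr \le c_R(t)\, \mu(\{r_- < u < r_+\}),
\]
so by pigeonholing, some $r^* \in [r_-, r_+]$ satisfies $\mu^+(B_{r^*}) \le c_R(t)\, \mu(\{r_- < u < r_+\})/(r_+ - r_-)$. The core technical step is to choose the window $[r_-, r_+]$ of width proportional to $\sqrt{\log(1/\eta)}$ so that $\mu(\{r_- < u < r_+\})/(r_+ - r_-) \le \sqrt{\pi/2}\, \NS_t(A)\,(1 + o_\eta(1))$. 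This estimate combines the Bakry--Ledoux bound with the symmetry identity $I_\gamma(\phi(x)) = I_\gamma(|\phi(x) - 1_A(x)|)$ (valid because $I_\gamma$ is symmetric about $1/2$), which reduces the bound on $\int I_\gamma(\phi)\, d\mu$ to a Jensen-type estimate on a nonnegative function with integral $\NS_t(A)$.

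The main obstacle is this window-measure bound. The crude Markov inequality $\mu(\{|u| < \delta\}) \le \NS_t(A)/\eta$, which is effectively what Kothari et al.\ rely on, yields only a constant of order $1/\pi$ in place of the target $\sqrt{\pi/2}$, and in fact blows up as $\eta \to 0$. The sharper bound needed here must exploit that the Gaussian half-space saturates the Bakry--Ledoux estimate pointwise, so that the corresponding distribution of $u$ is the extremal one; averaging over a window of width proportional to $\sqrt{\log(1/\eta)}$ then absorbs the logarithmic loss inherent in Jensen's inequality applied to the concave profile $I_\gamma$. The resulting error term $\sqrt{\pi}\,\eta/\sqrt{\log(1/\eta)}$ reflects a precise tradeoff: wider windows give a better surface-area constant but move the candidate level sets further from $A$, while narrower windows sharpen the symmetric-difference guarantee at the cost of a worse density estimate; the scale $\sqrt{\log(1/\eta)}$ balances the two.
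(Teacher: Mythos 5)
Your setup --- passing to $u = \Phi^{-1}(P_t 1_A)$ (which Bakry--Ledoux makes $c_R(t)$-Lipschitz), taking $B$ to be a level set of $u$, and bounding the symmetric difference by a Markov argument on $|P_t 1_A - 1_A|$ --- matches the paper's in spirit; the paper works in the $s = \Phi(r)$ coordinates, but the two are equivalent, and your symmetric-difference bound is exactly Lemma~\ref{lem:closeness}.

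The gap is in the surface-area step. You apply the coarea formula with the \emph{unweighted} measure $dr$ over a subwindow $[r_-, r_+]$ and then need the ``window-measure bound''
\[
\frac{\mu(\{r_- < u < r_+\})}{r_+ - r_-} \;\le\; \sqrt{\tfrac{\pi}{2}}\,\NS_t(A)\,(1 + o_\eta(1)),
\]
which you describe but do not prove. It is in fact false. On the torus ($R = 0$), take a dashed-line set $A$ of density $\approx\eta$ at a scale $\ll \sqrt t$, so that $P_t 1_A \approx \eta$ everywhere; then $u \approx -\delta$ on a set of measure $\approx 1$, while $\NS_t(A) \approx 2\eta$. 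For any window $[r_-, r_+] \subset [-\delta, \delta]$ whose left end is at (or just above) $-\delta$, the left side is of order $1/\delta$, while the right side is of order $\eta$; since $\eta\delta \to 0$, the inequality fails by an unbounded factor as $\eta \to 0$. Nothing in the appeal to extremality of half-spaces or ``absorbing the Jensen loss'' repairs this, because the unweighted pigeonhole simply is the wrong estimate: the pushforward of $\mu$ under $u$ can concentrate near the endpoints of the window, and the only control you have (via $\E\min\{\phi, 1-\phi\} \le \NS_t(A)$) cannot rule that out.

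The paper's fix is to weight the coarea integral so that no mass-in-a-window estimate is ever needed. In your $r$-coordinates the weight is $\Phi(-|r|)$ (equivalently, $\psi(s) = \min\{s, 1-s\}/I(s)$ in $s$-coordinates), and the key point is a \emph{pointwise} identity rather than a Jensen bound:
\[
\Phi(-|u|)\,|\grad u| \;=\; \psi(\phi)\,|\grad \phi| \;\le\; c_R(t)\,\psi(\phi)\,I(\phi) \;=\; c_R(t)\min\{\phi, 1-\phi\} \;\le\; c_R(t)\,|\phi - 1_A|,
\]
which integrates to $c_R(t)\,\NS_t(A)$ with no loss (Lemma~\ref{lem:coarea}). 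The weight then contributes its own integral over the window, $\int_{-\delta}^{\delta}\Phi(-|r|)\,dr = \int_\eta^{1-\eta}\psi(s)\,ds = \sqrt{2/\pi} - \frac{\sqrt2\,\eta}{\sqrt{\log(1/\eta)}}(1 + o(1))$, and dividing gives the constant $\sqrt{\pi/2}$ and the stated error term. Your symmetry observation $I(\phi) = I(|\phi - 1_A|)$ is correct but is a red herring here: what is actually used is $\psi(\phi) I(\phi) = \min\{\phi, 1-\phi\} \le |\phi - 1_A|$, not concavity of $I$.
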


In order to prove Theorem~\ref{thm:main}, we will construct the set $B$
in a randomized way, using a construction that is due to
Kothari et al.~\cite{KNOW:13}.
Their construction is quite simple: we first
smooth the function $1_A$ using $P_t$ and then threshold $P_t 1_A$ to obtain
$1_B$. One difficulty
with this procedure is to find a suitable threshold value. Kothari et al.\ dealt
with this difficulty in a remarkably elegant way: they showed that after
thresholding at an appropriately chosen random value, the expected surface area
of the resulting set is small. In particular, there is some threshold value
that suffices.

The analysis of the random thresholding procedure uses two main tools:
the first is the coarea formula (see, e.g.~\cite{Federer:69}), which will
allow us
to express the expected surface area of our thresholded set in terms of the
gradient of $P_t 1_A$.

\begin{theorem}[Coarea formula]\label{thm:coarea}
For any $\calC^1$ function $f: M \to [0, 1]$,
any $\mu \in L_1(M)$, and any
$\psi \in L_\infty([0, 1])$,
\begin{multline*}
 \int_0^1 \psi(s) \int_{\{x \in M: f(x) = s\}}\mu(x)\, d\calH_{n-1}(x) \, ds \\
 = \int_M \psi(f(x)) |\grad f(x)|\mu(x)\, dx.
\end{multline*}
\end{theorem}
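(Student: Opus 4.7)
The identity is the classical coarea formula of Federer, augmented by an extra density $\mu$; accordingly the plan is to reduce to the density-free statement and then push the density through unchanged. First I would observe that by linearity in $\psi$ and monotone convergence, it suffices to treat $\psi = \mathbf{1}_{[a,b]}$, so the target becomes
\[
 \int_a^b \int_{f^{-1}(s)} \mu \, d\calH_{n-1}\, ds
 = \int_{f^{-1}([a,b])} |\grad f| \, \mu \, dx.
\]
Next I would split $M$ according to whether $|\grad f|$ vanishes. On the critical set $C = \{|\grad f| = 0\}$, the right-hand integrand is identically zero, and by Sard's theorem $f(C)$ has Lebesgue measure zero, so the outer integral on the left also receives no contribution from critical fibers. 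Thus it suffices to establish the identity on $M \setminus C$.

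On the regular set, I would work locally via the implicit function theorem. Around each point where $\grad f \neq 0$, there exist coordinates $(y_1, \dots, y_{n-1}, s)$ such that $f(y, s) = s$. In such a chart the level set $\{f = s_0\}$ is the coordinate slice $\{s = s_0\}$, and a direct computation of the Riemannian volume element gives
\[
 d\vol_g = \frac{1}{|\grad f|} \, d\calH_{n-1}^{\{f=s\}}(y) \, ds,
\]
because $df$ paired with the unit normal to the level set equals $|\grad f|$, so slicing against the coordinate $s$ picks up exactly this Jacobian factor. Multiplying both sides by $|\grad f| \mu$ and invoking Fubini yields the coarea identity within the chart.

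Finally I would assemble the global statement from these local ones via a partition of unity $\{\chi_\alpha\}$ subordinate to a cover of $M \setminus C$ by such charts: applying the local identity to $\chi_\alpha \mu$ and summing (using dominated convergence to justify interchanging the sum with the $s$-integral) produces the claimed equality.

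The main obstacle is the handling of the critical set $C$, since $f^{-1}(s)$ need not be a smooth $(n-1)$-submanifold for every $s$; one must lean on Sard's theorem (which needs only $\calC^1$ regularity in this codimension by the Morse-Sard theorem) to ensure that for almost every $s$ the fiber is regular, and control the $\calH_{n-1}$-measure of $f^{-1}(s) \cap C$ to show it contributes nothing on the left. Everything else reduces to the change of variables within a single chart and a partition-of-unity argument.
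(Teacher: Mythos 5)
The paper does not prove this statement: it is quoted as a classical fact with a citation to Federer, so there is no in-paper proof to compare against. What follows evaluates your sketch on its own terms.

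The overall plan is the standard one and is sound in outline: reduce to indicator $\psi$, split off the critical set, straighten the level sets locally via the implicit function theorem, compute the Jacobian factor $|\grad f|$ that converts Riemannian volume into slice measure $\times\, ds$, and glue with a partition of unity. The local volume decomposition $d\vol_g = |\grad f|^{-1}\, d\calH_{n-1}^{\{f=s\}}\, ds$ on the regular set is correct, and the partition-of-unity assembly is routine.

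The gap is the appeal to Sard's theorem. You assert that $\calC^1$ regularity suffices for Morse--Sard ``in this codimension,'' but for a map $f\colon M^n \to \R$ (target of dimension one) the Morse--Sard theorem requires $\calC^n$ regularity, not $\calC^1$. Whitney's classical counterexample produces a $\calC^1$ function on $\R^2$ that is nonconstant along a connected arc inside its critical set, so the set of critical values contains an interval. Thus for $n \ge 2$ you cannot conclude that $f(C)$ is Lebesgue-null, and your argument fails at precisely the step you yourself identify as the main obstacle.

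The standard repair is to drop Sard entirely and instead invoke the coarea \emph{inequality} (the easy half of the coarea theorem, valid for any Lipschitz $f$ and provable by a direct covering estimate):
\[
\int_{\R} \calH_{n-1}\bigl(f^{-1}(s) \cap C\bigr)\, ds \;\le\; \int_C |\grad f|\, dx \;=\; 0,
\]
which yields $\calH_{n-1}(f^{-1}(s) \cap C) = 0$ for almost every $s$. This is weaker than saying $f(C)$ is null, but it is exactly what the left-hand side of your reduction requires, and with that substitution the proof goes through.
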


Our second tool is a pointwise bound on $|\grad P_t f|$ for any
$f: M \to [0, 1]$. This will allow us, after applying the coarea
formula, to obtain a sharp bound on the integral involving $|\grad P_t 1_A|$.

\begin{theorem}[\!\!\cite{BakryLedoux:96}]\label{thm:smoothness}
 If $P_t$ has curvature $R$ then for any $f: M \to [0, 1]$ and
 any $t > 0$,
 \[
  |\grad P_t f| \le c_R(t) I(P_t f),
 \]
 where $c_R(t) = \left(\frac{e^{2Rt} - 1}{R}\right)^{-1/2}$ if $R \ne 0$
 and $c_0(t) = (2t)^{-1/2}$.
\end{theorem}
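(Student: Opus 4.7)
The plan is to prove Theorem~\ref{thm:smoothness} via a maximum-principle argument for a semilinear parabolic equation, using as algebraic input the ODE $I \cdot I'' \equiv -1$ satisfied by the Gaussian isoperimetric function $I(s) = \phi(\Phi^{-1}(s))$, together with the curvature hypothesis in its infinitesimal Bakry--\'Emery form $\Gamma_2(u) \ge R\,|\grad u|^2$ (equivalent to the definition of curvature used in the excerpt). The first step is to reformulate the claim: since $(\Phi^{-1})'(s) = 1/I(s)$, the identity $\grad(\Phi^{-1}(P_t f)) = \grad(P_t f)/I(P_t f)$ converts the target inequality into the pointwise Lipschitz bound $|\grad v_t| \le c_R(t)$, where $v_s := \Phi^{-1}(P_s f)$.

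Next I would derive the PDE governing $v_s$. Using $\phi'(v) = -v\phi(v)$ and expanding $L(\Phi(v_s)) = L P_s f$ by the chain rule for $L$ gives $L P_s f = I(P_s f)\,(L v_s - v_s |\grad v_s|^2)$, and dividing by $I(P_s f)$ yields the semilinear equation
\[
\partial_s v_s \;=\; L v_s \,-\, v_s\, |\grad v_s|^2.
\]
Setting $h_s := |\grad v_s|^2$ and differentiating in $s$, then invoking the Bochner-type identity $L|\grad v|^2 = 2\Gamma_2(v) + 2\grad v \cdot \grad L v$ and the curvature hypothesis, one obtains the Riccati-type parabolic inequality
\[
\partial_s h_s \;\le\; L h_s \,-\, 2 v_s\,\grad v_s \!\cdot\! \grad h_s \,-\, 2 R\, h_s \,-\, 2\, h_s^{\,2}.
\]
The crucial feature is the $-2\,h_s^{\,2}$ term: this is what will yield a bound on $h_t$ that is independent of the initial value $h_0$.

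The last step is a maximum principle. Let $\bar h(s) := \sup_{x \in M} h_s(x)$. At any spatial maximizer, $L h_s \le 0$ and $\grad h_s = 0$, so $\bar h'(s) \le -2R\, \bar h(s) - 2\, \bar h(s)^2$. The substitution $u(s) := 1/\bar h(s)$ linearizes this to $u'(s) \ge 2 + 2 R\, u(s)$, whose solution satisfies $u(t) \ge e^{2Rt} u(0) + (e^{2Rt} - 1)/R$ when $R \ne 0$ and $u(t) \ge u(0) + 2t$ when $R = 0$. Discarding the nonnegative $u(0)$ contribution gives $\bar h(t) \le c_R(t)^2$, which is precisely the claimed bound.

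The principal technical obstacle is that for $f = 1_A$ the initial data $v_0$ and $h_0$ are not functions---formally, $v_0 = \pm\infty$ on $A$ and $A^c$, and the gradient of $1_A$ is a measure on $\partial A$---so neither the PDE derivation nor the maximum principle applies at $s = 0$ directly. The standard remedy is a two-stage regularization---first $f \mapsto P_\nu f$ for small $\nu > 0$, then $f \mapsto \delta + (1 - 2\delta) f$ to force values into $[\delta, 1 - \delta]$---after which $v_0$ is smooth and bounded and $h_0$ is a bounded function; on a compact exhaustion of $M$, the envelope step in the maximum principle is then rigorously justified. The limits $\nu, \delta \to 0$ are taken at the end, and the \emph{initial-data-free} nature of the Riccati bound is essential here: the estimate $\bar h(t) \le c_R(t)^2$ survives these limits precisely because it does not reference $h_0$.
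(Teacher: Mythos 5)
The paper does not prove Theorem~\ref{thm:smoothness}: it is imported wholesale from Bakry and Ledoux~\cite{BakryLedoux:96} and used as a black box, so there is no internal proof to compare your attempt against.

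That said, your argument is a correct self-contained proof and the details check out. The reformulation via $v_s = \Phi^{-1}(P_s f)$ converting the claim to $|\grad v_t|\le c_R(t)$ is right, and the diffusion chain rule together with $\phi'(x) = -x\phi(x)$ does give $\partial_s v_s = Lv_s - v_s|\grad v_s|^2$. Writing $h_s = \Gamma(v_s)$, the Bochner identity $L\Gamma(v) = 2\Gamma_2(v) + 2\Gamma(v,Lv)$ and the infinitesimal form $\Gamma_2 \ge R\Gamma$ of the curvature hypothesis (which is indeed equivalent, for diffusions, to the pointwise gradient commutation the paper takes as the definition) yield
\[
\partial_s h_s \le L h_s - 2 v_s \grad v_s\!\cdot\!\grad h_s - 2R h_s - 2 h_s^2,
\]
and the Riccati substitution $u = 1/\bar h$ integrates to $u(t)\ge e^{2Rt}u(0) + (e^{2Rt}-1)/R$, giving $\bar h(t)\le c_R(t)^2$ exactly. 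Two remarks. First, your route is a direct parabolic maximum-principle argument; the route usually taken in the Bakry--Ledoux literature for this estimate is a semigroup-interpolation argument, showing that $s\mapsto P_s\bigl(\sqrt{\alpha(s)\,I^2(P_{t-s}f)+\Gamma(P_{t-s}f)}\bigr)$ is monotone for a suitable $\alpha$ and comparing endpoints. The interpolation route sidesteps the maximum principle entirely and is therefore more robust on noncompact $M$; yours is arguably more transparent about where the Riccati structure and the $I\cdot I''=-1$ identity enter. Second, the one step you should tighten is the passage from the pointwise parabolic inequality to $\bar h'(s)\le -2R\bar h(s)-2\bar h(s)^2$: this requires the supremum to be attained and $\bar h$ to be differentiable (Hamilton's trick), which on noncompact $M$ needs the compact-exhaustion / barrier argument you gesture at but do not carry out. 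Your observation that the final bound is independent of the initial data, so it survives the $\nu,\delta\to 0$ limits in the regularization, is the key structural point and you correctly identify it.
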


 For $g: M \to [0, 1]$, let $g^{\ge s}$ denote the set $\{x \in M: g(x) \ge s\}$.
 If $g$ is continuous then
 $\partial g^{\ge s} = \{x \in M: g(x) = s\}$. Hence the surface area of
 $g^{\ge s}$ is simply
 \[
 \mu^+(g^{\ge s}) = \int_{\{x \in M: g(x) = s\}} \mu(x) \, d\calH_{n-1}(x),
 \]
 and so the coarea formula (Theorem~\ref{thm:coarea}) implies that
 \[
  \int_0^1 \psi(s) \mu^+(g^{\ge s})\, ds
  = \int_M \psi(g(x)) |\grad g(x)| \mu(x)\, dx = \E \psi(g) |\grad g|.
 \]
 (From here on, we will often write $\E$ for the integral with
 respect to $\mu$ which, recall, is a probability measure.)
 On the other hand, $\int_0^1 \psi(s) \mu^+(g^{\ge s})\, ds
 \ge \min_{s \in [0, 1]} \mu^+(g^{\ge s}) \int_0^1 \psi(s)\, ds$. In particular,
 if we can show that $\E \psi(g) |\grad g|$ is small then it will follow
 that $\mu^+(g^{\ge s})$ is small for some $s$.
 
 Unsurprisingly, the quantity $\E \psi(g) |\grad g|$ is quite sensitive to
 the choice of $\psi$. In order to get the optimal constant in
 Theorem~\ref{thm:main}, we need to choose a particular function $\psi$.
 Namely, we define
 \[
  \psi(s) = \frac{\frac 12 - \left|s - \frac 12\right|}{I(s)}.
 \]

 \begin{lemma}\label{lem:coarea}
  For any measurable $A \subset M$ and any $t > 0$,
  \[
   \E \psi(P_t 1_A) |\grad P_t 1_A| \le c_R(t) \NS_t(A).
  \]
 \end{lemma}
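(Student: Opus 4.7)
The plan is to combine the pointwise gradient estimate of Theorem~\ref{thm:smoothness} with a simple pointwise comparison, so that almost no calculation is needed. First I would unpack the definition of $\psi$: since $\frac{1}{2}-|s-\frac{1}{2}| = \min(s,1-s)$, we have
\[
\psi(s) = \frac{\min(s,1-s)}{I(s)}.
\]
Applying Theorem~\ref{thm:smoothness} with $f = 1_A$, we obtain the pointwise bound $|\grad P_t 1_A(x)| \le c_R(t)\, I(P_t 1_A(x))$, which cancels the $I$ in the denominator of $\psi$ and gives
\[
\psi(P_t 1_A(x))\, |\grad P_t 1_A(x)| \le c_R(t) \min\!\bigl(P_t 1_A(x),\, 1 - P_t 1_A(x)\bigr).
\]

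Next, I would use the fact that $1_A$ is $\{0,1\}$-valued to obtain the pointwise comparison
\[
\min\!\bigl(P_t 1_A(x),\, 1 - P_t 1_A(x)\bigr) \le |P_t 1_A(x) - 1_A(x)|.
\]
Indeed, if $1_A(x) = 1$ the right-hand side equals $1 - P_t 1_A(x)$, which is one of the two quantities being minimized on the left; the case $1_A(x)=0$ is symmetric. Integrating this against $\mu$ and recalling the definition~\eqref{eq:ns-def} of $\NS_t(A)$ yields the desired inequality
\[
\E\, \psi(P_t 1_A)\, |\grad P_t 1_A| \le c_R(t) \int_M |P_t 1_A - 1_A|\, d\mu = c_R(t) \NS_t(A).
\]

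The only mildly delicate point is that Theorem~\ref{thm:smoothness} is stated for $f : M \to [0,1]$ but one might want $f \in \calA$ to justify manipulations with $\grad$; this is handled by approximating $1_A$ in $L_1(\mu)$ by functions in $\calA$ taking values in $[0,1]$ and passing to the limit, using that $P_t$ smooths any $L_1$ function into something $\calC^1$ so that $\grad P_t 1_A$ makes classical sense. I expect this approximation step to be the only point that requires care; everything else is a one-line manipulation using the two pointwise bounds above.
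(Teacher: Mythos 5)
Your proof is correct and follows essentially the same route as the paper: apply Theorem~\ref{thm:smoothness} to cancel $I$ in the denominator of $\psi$, rewrite $\frac12 - |s-\frac12|$ as $\min(s,1-s)$, and bound $\min(P_t 1_A, 1-P_t 1_A) \le |P_t 1_A - 1_A|$ using that $1_A$ is $\{0,1\}$-valued. The only (cosmetic) difference is that you carry out the comparison pointwise before integrating, whereas the paper works with expectations throughout; your closing remark about approximating $1_A$ by elements of $\calA$ addresses a technicality the paper silently glosses over.
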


 \begin{proof}
 By Theorem~\ref{thm:smoothness},
 \begin{align*}
  \E \psi(P_t 1_A) |\grad P_t 1_A|
  &\le c_R(t) \E \psi(P_t 1_A) I(P_t 1_A) \\
  &=
  c_R(t)\E \left(\frac 12 - \left|P_t 1_A - \frac 12\right|\right).
 \end{align*}
 Now, $\frac 12 - |x - \frac 12| = \min\{x, 1-x\}$ and so
 \begin{align*}
  \E \left(\frac 12 - \left|P_t 1_A - \frac 12\right|\right)
  &= \E \min\{P_t 1_A, 1-P_t 1_A\} \\
  &\le \E |P_t 1_A - 1_A| \\
  &= \NS_t(A).
 \end{align*}
 \end{proof}
 
 Going back to the discussion before Lemma~\ref{lem:coarea}, we have
 shown that
 \begin{align*}
  \min_{s \in [0, 1]}
  \mu^+((P_t 1_A)^{\ge s}) \int_0^1 \psi(s)\, ds
  &\le \int_0^1 \mu^+((P_t 1_A)^{\ge s}) \psi(s)\, ds \\
  &\le c_R(t) \NS_t(A).
 \end{align*}
 Since we are concerned in this work with optimal constants, let us
 compute
 $\int_0^1 \psi(s)\, ds$:
 \begin{lemma}\label{lem:integral}
  $\displaystyle \int_0^1 \psi(s)\, ds = \sqrt{\frac 2\pi}$.
 \end{lemma}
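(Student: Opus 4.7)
The plan is to evaluate the integral by means of the substitution $s = \Phi(u)$, where $\Phi$ is the standard normal CDF and $\varphi = \Phi'$ is its density. The key identity is that the Gaussian isoperimetric function appearing in Theorem~\ref{thm:smoothness} satisfies $I(s) = \varphi(\Phi^{-1}(s))$, so under this substitution we have $ds = \varphi(u)\,du = I(s)\,du$, i.e.\ $ds/I(s) = du$. This is precisely what is needed to cancel the awkward $I(s)$ in the denominator of $\psi$.

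First, I would use the symmetry $I(s) = I(1-s)$ together with $\min\{s, 1-s\} = \tfrac{1}{2} - |s - \tfrac{1}{2}|$ to reduce the integral to
\[
\int_0^1 \psi(s)\,ds = 2 \int_0^{1/2} \frac{s}{I(s)}\,ds.
\]
Then, substituting $s = \Phi(u)$ (so that $u$ runs from $-\infty$ to $0$ as $s$ runs from $0$ to $1/2$) and using $ds/I(s) = du$, this becomes
\[
2\int_{-\infty}^0 \Phi(u)\,du.
\]

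Finally, I would compute this last integral by integration by parts, writing $\int_{-\infty}^0 \Phi(u)\,du = [u\Phi(u)]_{-\infty}^0 - \int_{-\infty}^0 u\varphi(u)\,du$. The boundary term vanishes at $u=0$ trivially and at $u=-\infty$ because $u\Phi(u) \sim -\varphi(u) \to 0$ (Mills' ratio). The remaining integral is elementary: since $\varphi'(u) = -u\varphi(u)$, we get $-\int_{-\infty}^0 u\varphi(u)\,du = [\varphi(u)]_{-\infty}^0 = \varphi(0) = 1/\sqrt{2\pi}$. Multiplying by $2$ yields $\sqrt{2/\pi}$, as claimed.

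There is no real obstacle here; the only subtle point is recognizing that the function $\psi$ was tailor-made so that the change of variables $s = \Phi(u)$ converts the integral into one that can be done by an elementary antiderivative. Everything else is routine calculus.
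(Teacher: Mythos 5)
Your proof is correct and uses the same key idea as the paper: the substitution $s = \Phi(u)$, which cancels the $I(s)$ in the denominator since $I(\Phi(u)) = \varphi(u)$. The only cosmetic difference is at the very end, where you evaluate $\int_{-\infty}^0 \Phi(u)\,du$ by integration by parts while the paper recognizes the equivalent integral $2\int_0^\infty (1-\Phi(y))\,dy$ as $\E|Z|$ for a standard Gaussian $Z$; both give $\sqrt{2/\pi}$.
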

 
 \begin{proof}
  We use the substitution $s = \Phi(y)$. Then $ds = \phi(y)\, dy$ and
  $I(s) = \phi(y)$. Hence,
  \[
   \int_0^1 \psi(s)\, ds
   = \int_{-\infty}^\infty \frac 12 - \left|\frac 12 - \Phi(y)\right|\,dy
   = 2 \int_0^\infty 1 - \Phi(y)\, dy,
  \]
  where the last equality follows because $\Phi(-t) = 1-\Phi(t)$ and
  $\Phi(t) \ge \frac 12$ for $t \ge 0$. Recalling the definition
  of $\Phi$, if we set $Z$ to be a standard Gaussian variable then
  \[
   2 \int_0^\infty 1 - \Phi(y)\, dy = 2 \E \max\{0, Z\} = \E |Z| =
   \sqrt{\frac 2\pi}.
  \]
 \end{proof}
 
 Combining Lemmas~\ref{lem:integral} and~\ref{lem:coarea}, we have shown
 the existence of some $s \in [0, 1]$ such that
 $\mu^+((P_t 1_A)^{\ge s}) \le \sqrt{\pi/2} c_R(t) \NS_t(A)$.
 This is not quite enough to prove Theorem~\ref{thm:main} because we
 need to produce a set $B$ such that $\mu(B \symdiff A)$ is small.
 In general, $(P_t 1_A)^{\ge s}$ may not be close to $A$;
 however, if $s \in [\eta, 1-\eta]$ then they are close:
 \begin{lemma}\label{lem:closeness}
  For any $t > 0$, if $s \in [\eta, 1-\eta]$ then
  \[
   \mu((P_t 1_A)^{\ge s} \symdiff A) \le \frac 1\eta \NS_t(A).
  \]
 \end{lemma}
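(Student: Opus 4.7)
The plan is to show that on the symmetric difference $(P_t 1_A)^{\ge s} \symdiff A$ the function $|P_t 1_A - 1_A|$ is pointwise bounded below by $\eta$, and then apply Markov's inequality, using the identity $\NS_t(A) = \int_M |P_t 1_A - 1_A|\, d\mu$ that follows from definition~\eqref{eq:ns-def}.

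Concretely, I would split the symmetric difference into two pieces:
\[
(P_t 1_A)^{\ge s} \symdiff A = \bigl(A \cap \{P_t 1_A < s\}\bigr) \cup \bigl(A^c \cap \{P_t 1_A \ge s\}\bigr).
\]
On the first piece, $1_A(x) = 1$ and $P_t 1_A(x) < s \le 1-\eta$, so $|P_t 1_A(x) - 1_A(x)| = 1 - P_t 1_A(x) > \eta$. On the second piece, $1_A(x) = 0$ and $P_t 1_A(x) \ge s \ge \eta$, so $|P_t 1_A(x) - 1_A(x)| = P_t 1_A(x) \ge \eta$. Hence $|P_t 1_A - 1_A| \ge \eta$ on the whole symmetric difference.

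Integrating this pointwise bound against $\mu$ gives
\[
\eta \cdot \mu\bigl((P_t 1_A)^{\ge s} \symdiff A\bigr) \le \int_{(P_t 1_A)^{\ge s} \symdiff A} |P_t 1_A - 1_A|\, d\mu \le \NS_t(A),
\]
and dividing by $\eta$ yields the claim. There is no real obstacle here; the only thing to watch is the choice of $s \in [\eta, 1-\eta]$, which is exactly what forces both one-sided gaps to be at least $\eta$ and thus makes the Markov-style bound work.
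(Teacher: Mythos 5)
Your proof is correct and follows essentially the same route as the paper: observe that on the symmetric difference the gap $|P_t 1_A - 1_A|$ is at least $\eta$ (because $s \in [\eta, 1-\eta]$), then apply Markov's inequality to the integral $\NS_t(A) = \E|P_t 1_A - 1_A|$. The paper phrases the case split slightly more compactly, but the argument is identical.
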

 
 \begin{proof}
  Note that if the indicator of $(P_t 1_A)^{\ge s}$ is not equal to
  $1_A$ then either
  $1_A = 0$ and $P_t 1_A \ge s$ or $1_A = 1$ and $P_t 1_A < s$. If
  $s \in [\eta, 1-\eta]$ then either case implies that
  $|P_t 1_A - 1_A| \ge \eta$. Hence,
  \[
   \mu((P_t 1_A)^{\ge s} \symdiff A) \le \frac 1\eta \E |P_t 1_A - 1_A|
   = \frac 1\eta \NS_t(A).
  \]
 \end{proof}

To complete the proof of Theorem~\ref{thm:main}, we need to
invoke Lemmas~\ref{lem:coarea} and~\ref{lem:integral} in a slightly different
way from before. Indeed, with Lemma~\ref{lem:closeness} in mind we want to
show that there is some $s$ for which $\mu^+((P_t 1_A)^+)$ is small
\emph{and} such that $s$ is not too close to zero or one. For this, we note that
\begin{align*}
 \int_\eta^{1-\eta} \psi(s) \, ds
 \min_{s \in [\eta, 1-\eta]} \mu^+(g^{\ge s})
 &\le 
 \int_\eta^{1-\eta} \psi(s) \mu^+(g^{\ge s})\, ds \\
 &\le
 \int_0^1 \psi(s) \mu^+(g^{\ge s})\, ds.
\end{align*}
With $g = P_t 1_A$, we see from Lemma~\ref{lem:coarea} that
\begin{equation}\label{eq:min-away-from-bdy}
 \min_{s \in [\eta, 1-\eta]} \mu^+((P_t 1_A)^{\ge s})
 \le \frac{c_R(t) \NS_t(A)}{\int_{\eta}^{1-\eta} \psi(s)\, ds}.
\end{equation}
To compute the denominator, one checks (see, e.g.,~\cite{BakryLedoux:96})
the limit $I(x) \sim x \sqrt{2 \log(1/x)}$ as $x \to 0$ and so
$\psi(x) \sim (2 \log(1/x))^{-1/2}$ as $x \to 0$. Hence,
$\int_0^\eta \psi(s)\, ds \sim \eta (2 \log(1/\eta))^{-1/2}$ as
$\eta \to 0$ and since $\psi(x)$ is symmetric around $x=1/2$,
\[
 \int_\eta^{1-\eta} \psi(s)\, ds
 = \int_0^1 \psi(s)\, ds - \frac{\sqrt 2 \eta}{\sqrt{\log(1/\eta)}}(1 + o(1))
\]
as $\eta \to 0$. Applying this to~\eqref{eq:min-away-from-bdy}
(along with the formula from Lemma~\ref{lem:integral}), there must
exist some $s \in [\eta, 1-\eta]$ with
\begin{align*}
 \mu^+((P_t 1_A)^{\ge s})
 &\le \frac{c_R(t) \NS_t(A)}{\int_{\eta}^{1-\eta} \psi(s)\, ds} \\
 &\le \sqrt{\frac{\pi}{2}} \left(1 + \frac{\sqrt \pi \eta}{\sqrt{\log(1/\eta)}}(1 + o(1))\right) c_R(t) \NS_t(A).
\end{align*}
Taking $B = (P_t 1_A)^{\ge s}$ for such an $s$, we see from
Lemma~\ref{lem:closeness} that this $B$ satisfies the claim of
Theorem~\ref{thm:main}, thereby completing the proof of that theorem.

\section{Acknowledgements}
We would like to thank E. Mossel for helpful discussions, and M. Ledoux for pointing out
that the argument of~\cite{Ledoux:94} may be extended.
Much of the work on this paper was done while the author was visiting the Simons Institute
for the Theory of Computing, which he thanks for its hospitality.

The author is partially supported by 
DTRA grant HDTRA1-13-1-0024.

\bibliographystyle{abbrv}
\bibliography{all}

\begin{thebibliography}{10}

\bibitem{BaGuCr:86}
AJ~Baddeley, Hans-J{\o}rgen~G Gundersen, and Luis~Manuel Cruz-Orive.
\newblock Estimation of surface area from vertical sections.
\newblock {\em Journal of Microscopy}, 142(3):259--276, 1986.

\bibitem{BakryEmery:85}
D.~Bakry and M.~{\'E}mery.
\newblock Diffusions hypercontractives.
\newblock {\em S{\'e}minaire de Probabilit{\'e}s XIX 1983/84}, pages 177--206,
  1985.

\bibitem{BakryLedoux:96}
D.~Bakry and M.~Ledoux.
\newblock L{\'e}vy--{G}romov's isoperimetric inequality for an infinite
  dimensional diffusion generator.
\newblock {\em Inventiones mathematicae}, 123(2):259--281, 1996.

\bibitem{BBBY:12}
Maria-Florina Balcan, Eric Blais, Avrim Blum, and Liu Yang.
\newblock Active property testing.
\newblock In {\em Proceedings of the 2012 IEEE 53rd Annual Symposium on
  Foundations of Computer Science}, FOCS '12, pages 21--30, Washington, DC,
  USA, 2012. IEEE Computer Society.

\bibitem{BeNaNi:06}
Mikhail Belkin, Hariharan Narayanan, and Partha Niyogi.
\newblock Heat flow and a faster algorithm to compute the surface area of a
  convex body.
\newblock In {\em FOCS'06}, pages 47--56. IEEE, 2006.

\bibitem{BlLuRu:93}
Manuel Blum, Michael Luby, and Ronitt Rubinfeld.
\newblock Self-testing/correcting with applications to numerical problems.
\newblock {\em Journal of computer and system sciences}, 47(3):549--595, 1993.

\bibitem{CuFrRo:07}
Antonio Cuevas, Ricardo Fraiman, and Alberto Rodr{\'\i}guez-Casal.
\newblock A nonparametric approach to the estimation of lengths and surface
  areas.
\newblock {\em The Annals of Statistics}, 35(3):1031--1051, 2007.

\bibitem{Federer:69}
Herbert Federer.
\newblock {\em {Geometric measure theory}}.
\newblock {Springer-Verlag}, 1969.

\bibitem{KearnsRon:98}
Michael Kearns and Dana Ron.
\newblock Testing problems with sublearning sample complexity.
\newblock In {\em Proceedings of the 11th annual Conference on Learning
  Theory}, pages 268--279, 1998.

\bibitem{KlOdSe:08}
Adam~R. Klivans, Ryan O'Donnell, and Rocco~A. Servedio.
\newblock Learning geometric concepts via gaussian surface area.
\newblock In {\em FOCS'08}, pages 541--550. IEEE, 2008.

\bibitem{KNOW:13}
P.~Kothari, A.~Nayyeri, R.~O'Donnell, and C.~Wu.
\newblock Testing surface area.
\newblock In {\em Proceedings of the Twenty-Fifth Annual ACM-SIAM Symposium on
  Discrete Algorithms}, pages 1204--1214.

\bibitem{Ledoux:94}
M.~Ledoux.
\newblock Semigroup proofs of the isoperimetric inequality in {E}uclidean and
  {G}auss space.
\newblock {\em Bulletin des sciences math{\'e}matiques}, 118(6):485--510, 1994.

\bibitem{Ledoux:00}
Michel Ledoux.
\newblock The geometry of {M}arkov diffusion generators.
\newblock {\em Ann. Fac. Sci. Toulouse Math. (6)}, 9(2):305--366, 2000.
\newblock Probability theory.

\bibitem{LYZZP:10}
Yu-Shen Liu, Jing Yi, Hu~Zhang, Guo-Qin Zheng, and Jean-Claude Paul.
\newblock Surface area estimation of digitized 3d objects using quasi-monte
  carlo methods.
\newblock {\em Pattern Recognition}, 43(11):3900--3909, 2010.

\bibitem{Santalo:53}
Luis~Antonio Santal\'o.
\newblock {\em Introduction to integral geometry}.
\newblock Hermann, 1953.

\end{thebibliography}
\end{document}